\documentclass[12pt]{amsart}

\usepackage{amssymb}
\usepackage{amsmath}
\usepackage{amsthm}
\usepackage[all]{xy}


\setlength{\oddsidemargin}{40mm}
\addtolength{\oddsidemargin}{-1in}
\setlength{\evensidemargin}{40mm}
\addtolength{\evensidemargin}{-1in}
\setlength{\textwidth}{130mm}

\theoremstyle{plain}
\newtheorem{thm}{Theorem}[section]
\newtheorem{prop}[thm]{Proposition}

\newtheorem{cor}[thm]{Corollary}

\theoremstyle{definition}
\newtheorem{defn}[thm]{Definition}

\theoremstyle{remark}
\newtheorem{rem}[thm]{Remark}

\DeclareMathOperator{\Spec}{Spec}
\DeclareMathOperator{\Proj}{Proj}

\DeclareMathOperator{\id}{id}

\def\N{\mathbb{N}}
\def\Z{\mathbb{Z}}
\def\Q{\mathbb{Q}}
\def\R{\mathbb{R}}
\def\C{\mathbb{C}}

\def\r+{\mathbb{R}_{\geq 0}}

\def\ep{\varepsilon}

\def\s'sp{s_{2}(P;\sigma)}
\def\r+{{\R}_{\geq 0}}
\def\P{\mathbb{P}}
\def\arw{\rightarrow}

\def\w.{W_{\bullet}}

\def\*c{\C^{\times}}
\def\s{\tilde{s}}

\newcommand{\calo}{\mathcal {O}}

\makeatletter
  
  \@addtoreset{equation}{section}
\makeatother

\begin{document}

\title{Algebro-geometric characterization of Cayley polytopes}
\author{Atsushi Ito}
\address{Department of Mathematics, Kyoto University, Kyoto 606-8502, Japan}
\email{aito@math.kyoto-u.ac.jp}

\begin{abstract}
In this paper,
we give an algebro-geometric characterization of Cayley polytopes.
As a special case,
we also characterize lattice polytopes with lattice width one
by using Seshadri constants.
\end{abstract}

\subjclass[2010]{14M25; 52B20}
\keywords{Cayley polytope, toric variety, dual defect, Seshadri constant}

\maketitle


\section{Introduction}\label{intro}

Let $P_0,\ldots,P_r$ be lattice polytopes in $\R^s$.
The Cayley sum $P_0*\cdots*P_r$
is defined to be the convex hull of
$(P_0 \times 0) \cup  (P_1 \times e_1) \cup \ldots \cup(P_r \times e_r)$
in $\R^s \times \R^r$ for the standard basis $e_1,\ldots,e_r$ of $\R^r$.

A lattice polytope $P \subset \R^n$
is said to be a Cayley polytope of length $r+1$,
if there exists an affine isomorphism $\Z^n \cong \Z^{n-r} \times \Z^r $
identifying $P$ with the Cayley sum $P_0*\cdots*P_r$
for some lattice polytopes $P_0,\ldots,P_r$ in $\R^{n-r}$.
In other words,
$P$ is a Cayley polytope of length $r+1$
if and only if $P$ is mapped onto a unimodular $r$-simplex
by a lattice projection $\R^n \arw \R^r$.

On the other hand,
an $n$-dimensional polarized toric variety $(X_P,L_P)$ is defined
for any lattice polytope $P \subset \R^n$ of dimension $n$.
The purpose of this paper is to characterize Cayley polytopes algebro-geometrically.
To state the main theorem of this paper,
we use the following notation.

\begin{defn}\label{def of planes}
Let $r$ be a positive integer.
A polarized variety $(X,L)$ is called an $r$-plane
if it is isomorphic to $(\P^r,\calo_{\P^r}(1))$.
To simplify notation,
we say $X$ is an $r$-plane
when the polarization $L$ is clear (e.g.\ a subvariety of a polarized variety).

Let $(X,L)$ be a polarized variety.
We say that $(X,L)$ is covered by $r$-planes,
if for any general point $p \in X$
there exists a subvariety $Z \subset X$ containing $p$ such that $(Z,L|_Z)$ is an $r$-plane.
When $r=1$,
we say that $(X,L)$ is covered by lines.
\end{defn}

If a lattice polytope $P \subset \R^n$ is a Cayley polytope of length $r+1$,
it is not difficult to see that $(X_P,L_P)$ is covered by $r$-planes.
The following theorem states that the converse holds.

\begin{thm}\label{thm1}
Let $P \subset \R^n$ be a lattice polytope of dimension $n$.
Then $P$ is a Cayley polytope of length $r+1$
if and only if $(X_P,L_P)$ is covered by $r$-planes.
\end{thm}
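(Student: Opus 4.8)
The plan is to run everything through the standard dictionary in which the lattice points of $P$ index the homogeneous coordinates $x_m$ ($m \in P\cap M$) on $\proj^N = \proj(H^0(X_P,L_P)^\vee)$, with $X_P \subseteq \proj^N$ the closure of the big torus $T$ and with homogeneous ideal generated by the binomials $x^a - x^b$ attached to the relations $\sum_m a_m m = \sum_m b_m m$, $\sum_m a_m = \sum_m b_m$. For the forward implication, assume $P = P_0 * \cdots * P_r$, so there is a surjective lattice map $\pi \colon M \rarw \Z^r$ with $\pi(P) = \Delta_r = \conv(0,e_1,\dots,e_r)$ and $\pi(P\cap M) \subseteq \{0,e_1,\dots,e_r\}$. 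Dualizing $\pi$ gives an $r$-dimensional subtorus $T_r \subseteq T$ whose action on $\proj^N$ scales $x_m$ by the weight $\pi(m)$. The first step is to compute the orbit closure $\overline{T_r \cdot q}$ of a point $q \in T$: grouping the coordinates by weight and introducing variables $z_0,\dots,z_r$ dual to $0,e_1,\dots,e_r$, the orbit map extends to a morphism from $\proj^r$ sending $[z_0:\cdots:z_r] \mapsto (z_{\pi(m)}\,q^m)_m$, which is linear in $z$ and injective because every weight occurs; hence $\overline{T_r\cdot q}$ is a linearly embedded $\proj^r$, and it lies in $X_P$ since $X_P$ is $T$-invariant and closed. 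This produces an $r$-plane through every point of $T$. To cover the boundary as well, I would take the closure of this family inside the Grassmannian of $r$-planes in $\proj^N$: the universal family is proper, its image is closed and contains the dense set $T$, hence equals $X_P$, and every limit member is again an $r$-plane contained in $X_P$.

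For the converse, assume $(X_P,L_P)$ is covered by $r$-planes. Since a general point of $X_P$ lies on the torus, there is an $r$-plane $\Lambda \cong \proj^r$ with $\Lambda \subseteq X_P$ passing through a point $q \in T$. Restricting the coordinates gives linear forms $\ell_m := x_m|_\Lambda \in H^0(\Lambda,\calo(1))$; each is nonzero because $\ell_m(q) = q^m \neq 0$, and together they span $H^0(\Lambda,\calo(1))$ because $\Lambda$ is embedded linearly and nondegenerately in its span. The key point is that the defining binomials of $X_P$ restrict to the multiplicative identities $\prod_m \ell_m^{a_m} = \prod_m \ell_m^{b_m}$ for every relation $\sum a_m m = \sum b_m m$ with $\sum a_m = \sum b_m$. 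Reading these as equalities of effective divisors on $\Lambda$, and writing $[\ell_m] \in (\proj^r)^\vee$ for the hyperplane cut out by $\ell_m$, they say exactly that $m \mapsto [\ell_m]$ respects all affine-lattice relations among the points of $P\cap M$.

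It follows that $m \mapsto [\ell_m]$ is the restriction of an affine-linear map $\rho \colon M \rarw \Z^S$, where $S = \{[\ell_m] : m \in P\cap M\}$, carrying each $m \in P\cap M$ to the standard basis vector $e_{[\ell_m]}$. Thus $\rho$ maps $P$ onto the standard simplex on the vertex set $S$ and maps $P\cap M$ into its vertices, which is precisely a Cayley decomposition of $P$ of length $|S|$. Since the $\ell_m$ span an $(r+1)$-dimensional space, the points of $S$ span $(\proj^r)^\vee$, so $|S| \geq r+1$; and a Cayley polytope of length $\ell$ is a Cayley polytope of every length $2 \leq j \leq \ell$, by composing $\rho$ with a lattice projection $\Delta_{\ell-1} \rarw \Delta_{j-1}$ that sends vertices to vertices. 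Taking $j = r+1$ yields the desired structure.

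The main obstacle, and the step I expect to require the most care, is the integral bookkeeping in the converse: one must check that $\rho$ really is a lattice projection onto a \emph{unimodular} simplex, i.e. that its linear part has image exactly the sum-zero lattice generated by the differences $e_s - e_{s'}$ and not a strictly finer overlattice. This amounts to showing that the covering hypothesis forces $P\cap M$ to generate $M$ affinely (equivalently, that $\rho(P)$ has normalized volume one with respect to its image lattice), a property that fails for thin but non-spanning lattice polytopes in general and so must be extracted from the existence of the covering family itself. The special case $r = 1$ reduces to the statement that $X_P$ is covered by lines if and only if $P$ has lattice width one, and it is here that the Seshadri-constant reformulation advertised in the abstract should enter.
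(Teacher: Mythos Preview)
Your forward direction is essentially the paper's argument, phrased via subtorus orbits rather than via Lemma~\ref{bir_mor_from_projection}; note though that the paper's definition of ``covered by $r$-planes'' only asks for an $r$-plane through a \emph{general} point, so the Grassmannian closure step is unnecessary.

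The converse has a genuine gap, and your proposed fix is wrong. You correctly isolate the issue: the assignment $m\mapsto e_{[\ell_m]}$ is \emph{a priori} only affine-linear on the sublattice $M'$ affinely generated by $P\cap M$, and one must show it extends integrally to all of $M$. But this does \emph{not} amount to showing that the covering hypothesis forces $M'=M$. Take $P=\conv\bigl((0,0,0),(1,1,0),(1,0,1),(0,1,1)\bigr)$: here $M'$ has index $2$ in $M=\Z^3$, yet the projection $(x,y,z)\mapsto x$ exhibits $P$ as Cayley of length $2$, so $(X_P,L_P)$ is covered by lines by the forward implication. Thus the covering hypothesis does not force $M'=M$. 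What actually goes wrong in your outline is the identification of $X_P$ with the closure of the torus in $\proj^N$: when $M'\subsetneq M$ the map $\phi_P\colon X_P\to\proj^N$ is finite of degree $[M:M']>1$, and a general $r$-plane in $\phi_P(X_P)$ does \emph{not} lift to an $r$-plane in $X_P$. In the example, $\phi_P(X_P)=\proj^3$ is covered by $2$-planes, but $(X_P,L_P)$ is not. The extra information that $\Lambda$ sits in $X_P$ rather than merely in $\phi_P(X_P)$ is exactly what is needed to force integrality, and your divisor argument on $\Lambda$ alone does not see it: indeed, if you run your construction on a line in $\proj^3$ that does not split in $X_P$, the resulting $\rho$ fails to take integer values on $M\setminus M'$.

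The paper circumvents this entirely. Rather than working with an arbitrary $r$-plane, it first uses the torus action to degenerate $\Lambda$ (inside the Hilbert scheme of $X_P$) to an $r$-plane $\widetilde\Lambda$ whose image $\phi_P(\widetilde\Lambda)\subset\proj^N$ is a \emph{toric} linear subspace, i.e.\ one given by a coordinate projection $\Z^N\to\Z^r$. For such a $\widetilde\Lambda$ the inclusion $\widetilde\Lambda\hookrightarrow X_P$ restricts to a morphism of tori $(\C^\times)^r\hookrightarrow(\C^\times)^n$, hence induces a ring homomorphism $f\colon\C[\Z^n]\to\C[\Z^r]$. Since units map to units, each $x^u$ with $u\in\Z^n$ is sent to a monomial, and this directly yields an integral group homomorphism $\pi'\colon\Z^n\to\Z^r$ with $\pi'(P)$ a unimodular simplex. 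The degeneration is the essential new idea your proposal is missing.
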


In \cite{CD},
Casagrande and Di Rocco investigate $\Q$-factorial toric varieties $X \subset \P^N$ which are covered by lines in detail.
In the case $X_P$ is $\Q$-factorial, $L_P$ is very ample, and $r=1$,
Theorem \ref{thm1} follows from \cite{CD}.
In Theorem \ref{thm1},
we do not need any assumption on
the singularities of $X_P$
nor the very ampleness of $L_P$ .

\vspace{2mm}

Cayley polytopes or Cayley sums are used to study dual defects
in \cite{CD},  \cite{CC}, \cite{DDP}, \cite{DN}, \cite{DS}, \cite{DR}, \cite{Es}, \cite{GKZ}, etc.
For example,
if $X_P$ is smooth for $P \subset \R^n$,
Di Rocco \cite{DR} proved that $X_P \subset \P^N$ embedded by $L_P$ has a positive dual defect $d$
(see Definition \ref{def_dual_defect} for the definition of dual defects)
if and only if $P$ is $\Z$-affine isomorphic to a Cayley sum $P_0 * \cdots * P_{(n+d)/2}$
such that the normal fans of the $P_i$ are the same and $n+d \geq 4$.
When $X_P$ is $\Q$-factorial and $L_P$ is very ample,
a similar result is shown in \cite{CD}.

Curran and Cattani \cite{CC} and Esterov \cite{Es} show a relation between dual defects and Cayley polytopes
without any assumption on the singularities.
Roughly,
they prove that if $X_P \subset \P^{N}$ embedded by $L_P$ has a positive dual defect,
$P$ is a Cayley polytope of length $2$.
As a corollary of Theorem \ref{thm1},
we can refine this result (see Corollary \ref{dual defect and cayley poly} for the details).

\vspace{3mm}
We investigate the case $r=1$ a little more.
For a polarized variety $(X,L)$,
we can define a positive number $\ep(X,L;1)$,
which is called the Seshadri constant of $(X,L)$ at a very general point.
This is an invariant which measures the positivity of $(X,L)$.
In the following theorem,
we characterize Cayley polytopes of length $2$
by using Seshadri constants.

\begin{thm}\label{thm2}
Let $P \subset \R^n$ be a lattice polytope of dimension $n$.
Then the following are equivalent.
\begin{itemize}
\item[i)] $P$ is a Cayley polytope of length $2$,
\item[ii)] $(X_P,L_P)$ is covered by lines,
\item[iii)] $\ep(X_P,L_P;1)=1$.
\end{itemize}
\end{thm}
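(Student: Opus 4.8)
The plan is to read off the equivalence of i) and ii) directly from Theorem \ref{thm1}, and to obtain the equivalence of ii) and iii) from two estimates for the Seshadri constant, the delicate one being the lower bound together with an analysis of the curves that compute $\ep(X_P,L_P;1)$. Since a Cayley polytope of length $2$ is exactly a Cayley polytope of length $r+1$ with $r=1$, and since lines are by definition $1$-planes (Definition \ref{def of planes}), the implication i) $\Leftrightarrow$ ii) is nothing but Theorem \ref{thm1} applied with $r=1$; no further work is needed there.

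For ii) $\Rightarrow$ iii) I would first record that $L_P$ is globally generated, the lattice points of $P$ giving sections that define a morphism to projective space, and use this to show $\ep(X_P,L_P;1)\geq 1$. Indeed, for a very general point $x$ and any irreducible curve $C\ni x$, one can choose $D\in|L_P|$ passing through $x$, smooth at $x$, and not containing $C$ (possible because $L_P$ is base point free and $h^0(L_P)\geq 2$), and then
\[
L_P\cdot C=D\cdot C\geq (D\cdot C)_x\geq \mult_x D\cdot \mult_x C=\mult_x C,
\]
where $(D\cdot C)_x$ denotes the local intersection multiplicity at $x$ and $\mult_x D=1$ since $D$ is smooth at $x$. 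Hence $\frac{L_P\cdot C}{\mult_x C}\geq 1$, and taking the infimum over $C$ gives $\ep(X_P,L_P;1)\geq 1$. Conversely, if $(X_P,L_P)$ is covered by lines then through a very general $x$ there passes a line $\ell$, which is smooth with $\mult_x\ell=1$ and $L_P\cdot\ell=1$, so $\ep(X_P,L_P;1)\leq 1$. Combining the two bounds yields iii).

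The real content is iii) $\Rightarrow$ ii), and this is where I expect the main obstacle. The guiding idea is that the infimum defining $\ep(X_P,L_P;1)=1$ is, by the previous paragraph, an infimum of ratios all of which are $\geq 1$, and that it should be realized by an \emph{actual} curve $C$ through a very general point with $\frac{L_P\cdot C}{\mult_x C}=1$. Letting $x$ vary over very general points makes such a $C$ sweep out a covering family of irreducible curves; for any covering family the map from the universal curve to $X_P$ is dominant and generically finite, so in characteristic $0$ its general fibre is reduced and the general member is smooth at the general point, i.e.\ $\mult_x C=1$. Then $L_P\cdot C=\mult_x C=1$, so $C$ is a line and $(X_P,L_P)$ is covered by lines, as desired.

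The hard part is precisely to guarantee that the value $1$ is genuinely attained by a curve at a very general point—that a Seshadri curve exists—rather than merely approached by curves of ratio strictly greater than $1$, since a priori a very general Seshadri constant need not be computed by any single curve. I expect to settle this using the torus symmetry of $(X_P,L_P)$, which reduces the computation of $\ep(X_P,L_P;1)$ to the identity point of the torus and should allow one to exhibit an explicit minimizing curve; failing that, the existence question can be bypassed by arguing the contrapositive through Theorem \ref{thm1}, namely that $\ep(X_P,L_P;1)>1$ whenever $P$ is not a Cayley polytope of length $2$.
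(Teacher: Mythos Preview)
Your treatment of i) $\Leftrightarrow$ ii) and of ii) $\Rightarrow$ iii) is correct and matches the paper. The gap is exactly where you locate it: in iii) $\Rightarrow$ ii) you assume that the infimum $\ep(X_P,L_P;1)=1$ is attained by a curve, and then run a covering-family argument to force $\mult_x C=1$. But the existence of such a Seshadri curve is not automatic. From $\ep=1$ one only knows that $\mu^*L_P-E$ is nef and not ample on the blow-up at a very general point; Nakai--Moishezon then produces a subvariety $\widetilde{Z}$ with $(\mu^*L_P-E)^{\dim\widetilde{Z}}\cdot\widetilde{Z}=0$, which may well have $\dim\widetilde{Z}>1$. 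Your two suggested workarounds are not convincing as stated: torus symmetry pins down the point but does nothing to cut the dimension of $\widetilde{Z}$, and the ``contrapositive via Theorem \ref{thm1}'' just restates the implication you are trying to prove.

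The paper closes this gap by a different and rather concrete argument, formulated for \emph{any} polarized variety $(X,L)$ with $|L|$ base point free (Proposition \ref{sc and lines}), so the toric structure is not used at this step. Let $\phi:X\to\proj^N$ be the morphism given by $|L|$, fix a very general $p$, set $q=\phi(p)$, and blow up $X$ along the whole fibre $\phi^{-1}(q)=\{p_1,\dots,p_d\}$ (with $p=p_1$) and $\proj^N$ at $q$. The projection from $q$ gives $f:\widetilde{X}\to\proj^{N-1}$ with $f^*\calo(1)=\pi^*L-\sum_i E_i$. Starting from the Seshadri-exceptional $\widetilde{Z}$ for $\pi^*L-E_1$, one compares with the globally generated $\pi^*L-\sum_i E_i$ to see that $\widetilde{Z}$ is contracted by $f$; hence $\widetilde{Z}$ contains a curve $\widetilde{C}$ with $f(\widetilde{C})$ a point, so $\phi(\pi(\widetilde{C}))$ is a line in $\proj^N$ through $q$. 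A short computation then forces $\pi(\widetilde{C})$ itself to be a line through $p$. This is the missing mechanism for extracting a curve from the higher-dimensional $\widetilde{Z}$, and it is what your outline lacks.
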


In general,
it is very difficult to compute Seshadri constants.
Theorem \ref{thm2} gives an explicit description
for which lattice polytopes $P$
the Seshadri constant $\ep(X_P,L_P;1)$ is one.
\vspace{3mm}

This paper is organized as follows. In Section 2, we make some preliminaries.
In Section 3, we give the proof of Theorem \ref{thm1}.
In Section 4, we state a relation between Cayley polytopes and dual defects.
In Section 5, we prove Theorem \ref{thm2}.

\subsection*{Acknowledgments}
The author would like to express his gratitude to
Professor Benjamin Nill for kindly listening to
his naive idea
and informing him of the background of Cayley polytopes and references.
He would like to thank Professor Sandra Di Rocco
for valuable
suggestions and comments.
He is also grateful to his supervisor Professor Yujiro Kawamata
for giving him useful comments.
He is indebted to the anonymous referee for many helpful suggestions and comments.
In the first version of this paper,
we use some degenerations to prove the ``if part'' of Theorem \ref{thm1}.
The referee suggested considering some variant of Gr\"{o}bner degenerations. 
Although degenerations are not used in this version explicitly,
Gr\"{o}bner degenerations underlie in the proof.

The author was partially supported by
the Grant-in-Aid for JSPS fellows, No.\ 23-56182.

\section{Preliminaries}

\subsection{Notations and conventions}

We denote by $\N,\Z,\R$, and $\C$ the set of all
natural numbers, integers, real numbers, and complex numbers respectively.
In this paper, $\N$ contains $0$.
We denote $\C \setminus 0$ by $\*c$.

Let us denote by $e_1,\ldots,e_r$ the standard basis of $\Z^r$ or $\R^r$.
A \emph{lattice polytope} in $\R^n$ is the convex hull of a finite number of points in $\Z^n$.
The \emph{dimension} of a lattice polytope $P \subset \R^n$ is
the dimension of the affine space spanned by $P$.
For a subset $S$ in an $\R$-vector space,
we write $\Sigma(S)$ for
the closed convex cone spanned by $S$. 

Throughout this paper, we consider varieties or schemes over $\C$.
For a variety $X$, we say a property holds at a \emph{general} point of $X$
if it holds for all points in the complement of a proper algebraic subset.
For a variety $X$, we say a property holds at a \emph{very general} point of $X$
if it holds for all points in the complement of the union of countably many proper subvarieties.

\subsection{Cayley polytopes}

We say a linear map $\R^n \arw \R^r$ is a lattice projection
if it is induced from a surjective group homomorphism
$\Z^n \arw \Z^r$.

Let us recall the definitions of Cayley polytopes and lattice width.

\begin{defn}\label{def of Cayley}
Let $P$ be a lattice polytope in $\R^n$ and $r$ be a positive integer.
We say $P$ is a Cayley polytope of length $r+1$
if there exists a lattice projection $\R^n \arw \R^r$ which maps $P$ onto a unimodular $r$-simplex.
A unimodular $r$-simplex is a lattice polytope
which is identified with the convex hull of $0,e_1,\ldots,e_r$ in $\R^r$
by a $\Z$-affine translation.
\end{defn}

\begin{defn}\label{def of width}
Let $P$ be a lattice polytope in $\R^n$.
The lattice width of $P$
is the minimum of $\max_{u \in P}  \langle u,v \rangle - \min_{u \in P} \langle u,v \rangle$
over all non-zero integer linear forms $v$.
\end{defn}

\begin{rem}\label{cayley and width}
See \cite{BN} or \cite{DHNP} for other equivalent definitions of Cayley polytopes.
Note that $P$ has lattice width one
if and only if
$P$ is an $n$-dimensional Cayley polytope of length $2$.
\end{rem}

\subsection{Toric varieties}
In this subsection,
we recall notations about toric varieties
used in this paper.
We refer the reader to \cite{Fu} for a further treatment.

Let $P$ be a lattice polytope of dimension $n$ in $\R^n$.
Then we can define the polarized toric variety associated to $P$ as
\[
(X_P,L_P)= (\Proj \C[\Gamma_P], \calo(1)),
\]
where $\Gamma_P:=\Sigma(\{1\} \times P) \cap (\N \times \Z^n)$ is a subsemigroup of $\N \times \Z^n$.
We consider that $\Gamma_P$ is graded by $\N$,
that is,
the degree $k$ part of $\Gamma_P $ is $\Gamma_P \cap (\{k\} \times \Z^n)=\{k\} \times (kP \cap \Z^n) $. 
There exists a natural action on $X_P$ by the torus $(\*c)^n$.
We denote the maximal orbit in $X_P$ by $O_P = (\*c)^n.$

By definition,
a lattice point $u \in P \cap \Z^n$ corresponds to a global section $x^u \in H^0(X_P,L_P)$.
It is well known that such global sections form a basis of $H^0(X_P,L_P)$
and the linear system $|L_P|$ is base point free.
We denote by $\phi_P$ the morphism $X_P \arw \P^N$ defined by $|L_P|$,
where $N=\# (P \cap \Z^n)-1.$ 
Since $L_P$ is ample,
$\phi_P$ is a finite morphism.

\section{Proof of Theorem \ref{thm1}}

This section is devoted to the proof of Theorem \ref{thm1}.

\begin{proof}[Proof of Theorem \ref{thm1}]
Assume that $P$ is a Cayley polytope of length $r+1$
and take a lattice projection $\pi : \R^n \arw \R^r$
such that $\pi(P) $ is a unimodular $r$-simplex.
Restricting $\id_{\N} \times \pi : \N \times \Z^n \arw \N \times \Z^r$ on $\Gamma_P$,
we obtain a homomorphism of graded semigroups
\[
\psi : \Gamma_P \arw \Gamma_{\pi(P)} \ : \ (k,u) \mapsto (k,\pi(u)).
\]
Since $\pi(P) $ is a unimodular $r$-simplex,
any $u' \in \pi(P) \cap \Z^r$ is the image of some vertex of $P$ by $\pi$.
Hence the restriction of $\psi $ to the degree $1$ part
\[
\psi |_{\Gamma_P \cap (\{1\} \times \Z^n)} : \Gamma_P \cap (\{1\} \times \Z^n) \arw \Gamma_{\pi(P)} \cap (\{1\} \times \Z^r)
\]
is surjective.
Since $ \Gamma_{\pi(P)}$ is generated by elements of degree $1$,
$\psi$ is surjective.
Hence $\psi$ induces an embedding $\iota: X_{\pi(P)} \hookrightarrow X_P$ such that $\iota^* L_P = L_{\pi(P)} $.
Since $\pi(P)$ is a unimodular $r$-simplex,
$(X_{\pi(P)}, L_{\pi(P)})$ is an $r$-plane.
Hence $\iota(X_{\pi(P)})$ is an $r$-plane on $(X_P,L_P)$.
Since $\psi$ is the restriction of $\id_\N \times \pi : \N \times \Z^n \arw \N \times \Z^r$,
$\iota |_{O_{\pi(P)}}$ is nothing but the morphism
\[
O_{\pi(P)} = \Spec \C[\Z^r]  \hookrightarrow O_{P}= \Spec \C[\Z^n] \subset X_P
\]
defined by $\C[\Z^n] \arw \C[\Z^r] : x^u \arw x^{\pi(u)}$.
In particular,
$ \iota(X_{\pi(P)}) \cap O_P \not= \emptyset$.
Since $\iota(X_{\pi(P)})$ is an $r$-plane,
$(X_P,L_P)$ is covered by $r$-planes by the torus action.

\vspace{3mm}

To prove the converse,
assume that $(X_P,L_P)$ is covered by $r$-planes.
Then there exists an $r$-plane $Z \subset X_P$ such that
$Z \cap O_P \not = \emptyset$.
Consider the natural homomorphism of graded $\C$-algebras
\[
 \alpha : \bigoplus_{k \in \N} H^0(X_P, k L_P) \arw  \bigoplus_{k \in \N} H^0(Z , k L_P |_Z) .
\]
If $\alpha |_{H^0(X_P, L_P)} :  H^0(X_P,L_P) \arw H^0(Z, L_P |_Z)$ is not surjective,
$|L_P|$ has a base point on $Z$ since $(Z, L_P |_Z) \cong (\P^r, \calo_{\P^r}(1))$,
which is a contradiction.
Hence
\[
\alpha |_{H^0(X_P, L_P)} :  H^0(X_P,L_P) = \bigoplus_{u \in P \cap \Z^n} \C x^u  \arw H^0(Z, L_P |_Z)
\]
is surjective.
Thus there exist lattice points $u_0,\cdots,u_r \in P \cap \Z^n$
such that
$\alpha(x^{u_0}), \cdots, \alpha(x^{u_r})$ form a basis of $H^0(Z,L_P |_Z)$.
Set $X_i = \alpha (x^{u_i}) \in H^0(Z,L_P|_Z)$.
Since $(Z,L_P|_Z)$ is an $r$-plane, it holds that
\[
\bigoplus_{k \in \N} H^0(Z , k L_P |_Z) =\C[X_0,\cdots, X_r] .
\]
Since $\bigoplus_{k \in \N} H^0(X_P, k L_P) =\C[\Gamma_P] $,
$\alpha$ can be written as
\[
\alpha : \C[\Gamma_P]  \arw \C[X_0,\ldots, X_r].
\]
Let $\Delta_r \subset \R^r$ be the unimodular $r$-simplex whose vertices are $0, e_1,\cdots,e_r$.
We define a homomorphism of graded semigroups
\[
\beta : \Gamma_P \arw \Gamma_{\Delta_r}
\]
as follows.

For $(k,u) \in \Gamma_P$,
let $x^{(k,u)} \in \C[\Gamma_P]$ be the corresponding element.
Since $\alpha (x^{(k,u)})$ is a homogeneous polynomial of degree $k$ in $\C[X_0,\ldots, X_r]$,
\[
\alpha (x^{(k,u)}) = \sum_{ I=(i_1,\cdots,i_r) \in k \Delta_r \cap \Z^r} c_I X_0^{k - |I|}X_1^{i_1} \cdots X_r^{i_r}
\]
holds for some $c_I \in \C$,
where $|I| := i_1 + \cdots + i_r$.
By the identification of $\bigoplus_{k \in \N} H^0(X_P, k L_P)$ and $\C[\Gamma_P]$,
we identify $x^u \in H^0(X_P,k L_P)$ with $x^{(k,u)} \in \C[\Gamma_P]$.
Therefore,
if $ \alpha (x^{(k,u)}) =0$,
$Z$ is contained in the effective divisor defined by $ x^u \in H^0(X_P, k L_P)$,
which contradicts $Z \cap O_P \not = \emptyset$.
Thus $\alpha (x^{(k,u)}) \not=0 $ holds,
i.e.\
$\{ I \in k \Delta_r \cap \Z^r \ | \ c_I \not= 0 \} $ is not empty.
Hence we can define
\[
\beta(k,u) := (k, \min \{ I \in k \Delta_r \cap \Z^r \ | \ c_I \not= 0 \}) \in \Gamma_{\Delta_r},
\]
where the minimum is taken with respect to a fixed monomial order on $\N^r$ (e.g.\ the lexicographic order).

By definition,
\begin{align}\label{eq_1}
\beta(1,u) \in \{1\} \times (\Delta_r \cap \Z^r) 
\end{align}
holds
for any $u \in P \cap \Z^n$.
Since $\alpha(x^{(1,u_i)}) =X_i$ for $0 \leq i \leq r$,
we have 
\begin{equation}\label{eq_2}
\beta(1,u_i)=
\left\{
\begin{array}{ll}
(1, 0) &  \mbox{if \ \ \ \ $i = 0$}  \ \\
(1,e_i) &  \mbox{if \ $1 \leq i\leq r$} \ .   
\end{array}
\right .
\end{equation}

Since $\alpha$ is a ring homomorphism,
$\beta$ is a semigroup homomorphism.
By definition,
$\beta$ preserves the gradings.
Hence $\beta$ is a homomorphism of graded semigroups.

Since $\Gamma_P$ and $\Gamma_{\Delta_r}$ generate $ \Z \times \Z^n$ and $\Z \times \Z^r$ as groups respectively,
$\beta$ uniquely extends to a group homomorphism $ \Z \times \Z^n \arw \Z \times \Z^r$.
By abuse of notation,
we denote the group homomorphism by the same letter $\beta$.
By restricting $\beta$ on $\{0\} \times \Z^n$,
we obtain a group homomorphism $\pi : \Z^n \arw \Z^r$.
In other words,
$\pi$ is defined by $\beta (0, u) = (0, \pi(u))$ for $u \in \Z^n$.
Since $\beta$ and $\pi$ are group homomorphisms,
it holds that
\[
\beta (1,u) -\beta (1,u_0) = \beta (0, u-u_0) =(0, \pi (u) - \pi(u_0))
\]
for $u \in \Z^n$.
On the other hand,
\[
\beta (1,u) -\beta (1,u_0) = \beta (1,u) -(1,0) \in \{0\} \times (\Delta_r \cap \Z^r)
\]
holds for $u \in P \cap \Z^n$ by (\ref{eq_1}), (\ref{eq_2}).
Hence we have $\pi (u) - \pi(u_0) \in  \Delta_r \cap \Z^r$ for $u \in P \cap \Z^n$.
In particular,
if $u=u_i$ for $1 \leq i \leq r$,
\[
\beta (1,u_i) -\beta (1,u_0) = (1,e_i) -(1,0) =(0,e_i),
\]
hence $\pi(u_i) -\pi(u_0)=e_i$.
Thus $\pi$ is surjective
and the induced lattice projection $\R^n \arw \R^r$ maps $P$ onto the unimodular $r$-simplex $\Delta_r + \pi(u_0)$,
which is the parallel translation of $\Delta_r$ by $\pi(u_0)$.
Hence $P$ is a Cayley polytope of length $r+1$.
\end{proof}


\section{Dual defects}

In this section,
we see a relation between Cayley polytopes and dual defects.

\begin{defn}\label{def_dual_defect}
Let $X \subset \P^N$ be a projective variety.
The dual variety $X^*$ of $X$ is the closure of all points $H \in  (\P^N)^{\vee }$ 
such that as a hyperplane $H$ contains the tangent space $T_{X,p}$ for some smooth point $p \in X$,
where $(\P^N)^{\vee }$ is the dual projective space.
A variety $X$ in $\P^N$ is said to be dual defective
if the dimension of $X^*$ is less than $N-1$.
The dual defect of $X$ is the natural number $N-1 - \dim X^*$.
\end{defn}

By \cite{GKZ}, \cite{DFS}, \cite{MT}, etc.,
we have formulas to compute the dual defects or the degrees of the dual varieties of toric varieties.
As stated in Introduction,
a simple description of the dual defects of toric varieties is obtained by using Cayley polytopes
for the smooth case in \cite{DR}, \cite{DDP}, and \cite{DN}, 
and for the $\Q$-factorial case in \cite{CD}.

\vspace{2mm}
As an easy corollary of Theorem \ref{thm1},
we obtain a sufficient condition such that $P$ is a Cayley polytope
by using dual defects.
This is a generalization of a result proved in \cite{CC} or \cite{Es},
which is the case $r=1$ of the following.

\begin{cor}\label{dual defect and cayley poly}
Let $A \subset \Z^n$ be a finite set and let $P \subset \R^n$ be the convex hull of $A$.
Assume that 
the lattice $\Z^n$ is spanned by $\{ u - u' \, | \, u,u' \in  A\}$.
Let $\phi : X_P \arw \P^{\# A-1}$ be the morphism defined by the linear system $|V_A|$,
where $V_A =  \bigoplus_{u \in A} \C x^u  \subset H^0(X_P,L_P)$.
If the dual defect of the image $\phi (X_P) \subset \P^N$ is a positive integer $r$,
$ P$ is a Cayley polytope of length $r+1$.
In particular,
$P$ has lattice width one.
\end{cor}

\begin{proof}
It is known that if a projective variety $X \subset \P^N$ has dual defect $r$,
$X$ is covered by $r$-planes
(cf.\ \cite[Theorem 1.18]{Te} for example).
Thus if the dual defect of $\phi (X_P)$ is $r$,
then $\phi (X_P)$ is covered by $r$-planes.
The assumption that
$\Z^n$ is spanned by $\{ u - u' \, | \, u,u' \in A\}$
means that
$\phi : X_P \arw \phi (X_P)$ is birational.
Since $\phi (X_P)$ is covered by $r$-planes
and $\phi : X_P \arw \phi (X_P) $ is birational and finite,
$(X_P, \phi^*(\calo(1)))=(X_P,L_P)$ is also covered by $r$-planes.
Therefore $P $ is a Cayley polytope of length $r+1$ by Theorem \ref{thm1}.
\end{proof}

\begin{rem}
(1) In Corollary \ref{dual defect and cayley poly},
the assumption that $\Z^n$ is spanned by $\{ u - u' \, | \, u,u' \in A\}$
is necessary.
For example,
let $P \subset \R^3$ be the convex hull of $A:=\{ (0,0,0),(1,1,0),(1,0,1), (0,1,1) \}$.
Then the image $\phi (X_P) \subset \P^3$ is $\P^3$,
hence the dual defect of $\phi (X_P)$ is $3$.
But $P$ is not a Cayley polytope of length $4$.\\
(2) The converse of Corollary \ref{dual defect and cayley poly} does not hold.
For example, let $P \subset \R^2$ be the convex hull of $A:=\{(0,0),(1,0),(0,1),(1,1)\}$.
Then $P$ is a Cayley polytope of length $2$,
but $\phi (X_P)=X_P \subset \P^3$ is a smooth quadric surface,
which is not dual defective.
\end{rem}


\section{Lattice width one and Seshadri constants}

In \cite{De},
Demailly defined Seshadri constants,
which measure the positivity of line bundles.
In this section,
we characterize lattice polytopes with lattice width one
by using Seshadri constants.
We refer the reader to Chapter 5 of \cite{La} for a detailed treatment of Seshadri constants.

\vspace{2mm}
Recall the definition of Seshadri constants.

\begin{defn}
Let $(X,L)$ be a polarized variety 
and $p $ be
a point in $X$.
The Seshadri constant $\ep(X,L;p)$ of $L$ at $p$ is defined to be
\[
\ep(X,L;p)=\inf_C \dfrac{C.L}{m_p(C)} ,
\]
where $C$ varies over all curves on $X$ containing $p$
and $m_p(C)$ is the multiplicity of $C$ at $p$.
It is well known that
\[
\ep(X,L;p)= \max \{ s >0 \, | \, \mu^*L-s E \  \text{is nef} \, \} 
\]
holds, where $\mu$ is the blow-up at $p$ and $E$ is the exceptional divisor.

It is known that $\ep(X,L;p)$ is constant for very general $p$.
Thus we can define $\ep(X,L;1)$ to be
\[
\ep(X,L;1)=\ep(X,L;p) 
\]
for very general $p \in X$.
\end{defn}

Explicit computations of Seshadri constants sometimes
give interesting geometric consequences,
but
it is very difficult to compute Seshadri constants in general.
Even in toric cases, Seshadri constants are computed only at torus invariant points \cite{DR1}, \cite{B+}.

When $L$ is very ample,
it follows from Lemma 2.2 in \cite{Ch} that
$\ep(X,L;1)=1$ if and only if 
$(X,L)$ is covered by lines.
We generalize this to the case when $L$ is base point free.

\begin{prop}\label{sc and lines}
Let $(X,L)$ be a polarized variety
and assume that the linear system $|L|$ is base point free.
Then $\ep(X,L;1)=1$
if and only if $(X,L)$ is covered by lines.
\end{prop}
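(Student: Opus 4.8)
The plan is to prove the two implications separately, using the base-point-free assumption to relate the positivity of $L$ at a very general point to the existence of lines through that point. First I would recall that since $|L|$ is base point free, $L$ is nef, and for any curve $C$ passing through a point $p$ the quantity $C.L$ is a non-negative integer. The key elementary observation is that $\ep(X,L;1) \leq 1$ always holds when $(X,L)$ is covered by lines: if $\ell$ is a line through a very general point $p$, then $\ell$ is smooth at $p$ so $m_p(\ell)=1$, while $\ell.L = \calo_{\proj^1}(1)^1 = 1$ because $L|_\ell \cong \calo_{\proj^1}(1)$; thus $\ell.L/m_p(\ell)=1$ and the infimum defining $\ep(X,L;p)$ is at most $1$. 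Combined with a lower bound, this gives the value exactly.

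For the lower bound direction, I would argue that base point freeness forces $\ep(X,L;1) \geq 1$ in general. The cleanest route is through the characterization $\ep(X,L;p)=\inf_C C.L/m_p(C)$: for any irreducible curve $C$ through $p$ with multiplicity $m=m_p(C)$, I want $C.L \geq m$. Here I would use that a general member $D \in |L|$ passing through $p$ (which exists and imposes a single condition since $|L|$ is base point free and separates $p$ from a general point) meets $C$ with multiplicity at least $m_p(D)\cdot m_p(C) \geq m$ locally at $p$ by the definition of intersection multiplicity, provided $C \not\subset D$, so that $C.L = C.D \geq m$. Choosing $D$ general enough to contain $p$ but not $C$ is where base point freeness is essential; this yields $C.L/m_p(C) \geq 1$ for all such $C$, hence $\ep(X,L;1)\geq 1$.

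It then remains to prove the converse: if $\ep(X,L;1)=1$, then $(X,L)$ is covered by lines. Here the idea is that the equality $\ep(X,L;p)=1$ at a very general $p$ must be witnessed by an actual curve (or a limit) computing the infimum. I would show that there exists an irreducible curve $C$ through $p$ with $C.L=m_p(C)$; since $C.L$ and $m_p(C)$ are positive integers with ratio $1$, both equal some common value, and one argues the minimal such configuration gives $C.L=m_p(C)=1$, forcing $C$ to be a smooth rational curve with $L|_C\cong\calo_{\proj^1}(1)$, i.e.\ a line through $p$. Letting $p$ range over a very general, hence dense, set produces lines covering $X$, which is exactly the condition in Definition \ref{def of planes}.

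\textbf{Main obstacle.} I expect the hard part to be the converse direction, specifically extracting a genuine line from the numerical equality $\ep(X,L;1)=1$. The Seshadri constant is defined as an infimum over all curves, and a priori this infimum need not be attained by any single curve at a fixed very general $p$; one must either invoke a result guaranteeing the existence of a Seshadri-extremal curve, or run a degeneration/limit argument over a family of points $p$ to produce a covering family of curves and then show a general member of this family is a line. Controlling that the limiting or extremal curve is irreducible with the exact numerical invariants $C.L=m_p(C)=1$ (rather than, say, a higher-degree curve with a worse singularity giving the same ratio) is the delicate technical point, and it is where a careful use of the very general hypothesis and the integrality of $C.L$ and $m_p(C)$ will be needed.
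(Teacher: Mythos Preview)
Your ``if'' direction is correct and matches the paper: lines give $\ep\le 1$, and base point freeness gives $\ep\ge 1$ (the paper simply cites \cite[Example 5.1.18]{La} for the latter, but your argument with a general $D\in|L|$ through $p$ is the standard one).

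The gap you flag in the ``only if'' direction is real, and your outline does not contain the idea that closes it. Knowing only that some curve $C$ through $p$ satisfies $C.L=m_p(C)$ does not by itself force $C.L=m_p(C)=1$; there is no purely numerical or ``minimality'' argument that rules out $k>1$. The paper does \emph{not} try to analyze an arbitrary Seshadri-extremal curve. Instead it uses the morphism $\phi:X\to\proj^N$ given by $|L|$ in an essential way. Set $q=\phi(p)$, let $\phi^{-1}(q)=\{p_1=p,\ldots,p_d\}$, and blow up $X$ at all $d$ points (not just at $p$); then $\pi^*L-\sum_i E_i$ is base point free, because it is the pullback of the projection of $\proj^N$ from $q$. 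Since $\pi^*L-E_1$ is nef but not ample, there is a subvariety $\widetilde Z$ with $(\pi^*L-E_1)^{\dim\widetilde Z}\cdot\widetilde Z=0$, and comparing with $(\pi^*L-\sum_i E_i)^{\dim\widetilde Z}\cdot\widetilde Z\ge 0$ forces $\widetilde Z\cap E_i=\emptyset$ for $i\neq 1$ and shows $\widetilde Z$ is contracted by the projection-from-$q$ map. A curve $\widetilde C\subset\widetilde Z$ in a fiber then has image $\phi(C)$ equal to a line in $\proj^N$ through $q$.

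This is the step that produces $m_p(C)=1$ and $C.L=1$ simultaneously: since $p$ is very general, $\phi$ is \'etale onto its image at $p$, and $\phi(C)$ is a smooth line at $q$, so $C$ is smooth at $p$, i.e.\ $m_p(C)=1$. Combined with $\widetilde C\cap E_i=\emptyset$ for $i\ne 1$ and $\widetilde C.(\pi^*L-\sum_i E_i)=0$, one gets $C.L=1$ and $\deg(\phi|_C)=1$, so $C$ is a line through $p$. The moral is that the base-point-free hypothesis is used not just for the lower bound $\ep\ge 1$, but to manufacture the auxiliary free linear system $|\pi^*L-\sum_i E_i|$ whose contraction singles out genuine lines; your proposal is missing this construction.
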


\begin{proof}
If $(X,L)$ is covered by lines,
$\ep(X,L;1) \leq 1$ holds by the definition of Seshadri constants.
On the other hand,
$\ep(X,L;1) \geq 1$ holds since $L$ is base point free (cf.\ \cite[Example 5.1.18]{La}).
Thus we have $\ep(X,L;1)= 1$.

\vspace{2mm}

To show the converse,
assume $\ep(X,L;1)=1$.
Since the locus
\[
\{ p \in X \, | \, \text{there exists a line on }  (X,L) \text{ containing } p  \}
\]
is Zariski closed in $X$,
it suffices to show that there exists a line on $X$ containing $p$ for \textit{very general} $p \in X$.
Let $\phi :X \rightarrow \P^N$ be the morphism defined by $|L|$,
and let $d$ be the degree of the finite morphism $\phi:X \rightarrow \phi(X)$.
Fix a very general point $p \in X$ and  set $q=\phi(p) \in \P^N$.
Then $\phi^{-1}(q)=\{p_1,\ldots,p_d\}$ is a set of $d$ points in $X$,
where $p=p_1$.

We consider the following diagram:
\[\xymatrix{
& \widetilde{X} \ar[dl]_(0.45){\pi} \ar[d]_(0.45){\widetilde{\phi}} \ar@/^/[ddr]^(0.45)f & \\
 X \ar[d]_(0.45){\phi}  & \widetilde{\P}^N \ar[dl]^(0.42){\pi'} \ar[dr]_(0.42){f'} &  \\
  \P^N & & \P^{N-1}   .\\
}\]
In the above diagram,
$\pi:\widetilde{X} \rightarrow X$ and $ \pi':\widetilde{\P}^N \rightarrow \P^N$
are the blow-ups along $\{p_1,\ldots,p_d\}$ and $q$ respectively.
Then $\phi, \pi$, and $\pi'$ induce a finite morphism
$\widetilde{\phi}: \widetilde{X} \rightarrow \widetilde{\P}^N$.
Let $E_1,\ldots,E_d$ and $E$ be the exceptional divisors over $p_1,\ldots,p_d$ and $q$ respectively.
The line bundle $\pi'^{*} \calo_{\P^N}(1)-E$ is base point free
and induces a morphism
$f':\widetilde{\P}^N \rightarrow \P^{N-1}$.
Set $f=f' \circ \widetilde{\phi} : \widetilde{X} \arw \P^{N-1}$.
Thus $f^* \calo_{\P^{N-1}}(1)=\pi^*L-\sum_{i=1}^d E_i$ holds.

By the assumption that $\ep(X,L;1)=1$ and $p$ is very general,
$\pi^*L - E_1 $ is nef but not ample.
Thus there exists a subvariety $\widetilde{Z}$ in $\widetilde{X}$
such that $\widetilde{Z}.(\pi^*L - E_1)^{\dim \widetilde{Z}}=0$ by Kleiman's criterion.
Furthermore, $\widetilde{Z}. \bigl(\pi^*L - \sum_{i=1}^d E_i \bigr)^{\dim \widetilde{Z}} \geq 0$
by the base point freeness of $\pi^*L - \sum_{i=1}^d E_i$.
Hence we have
\begin{align*}
0 &\leq \widetilde{Z}.\bigl(\pi^*L - \sum_{i=1}^d E_i \bigr)^{\dim \widetilde{Z}}\\
   &= Z.L^{\dim Z} - \sum_{i=1}^d m_{p_i}(Z)\\
   &\leq  Z.L^{\dim Z} - m_{p_1}(Z)\\
   &= \widetilde{Z}.(\pi^*L - E_1)^{\dim \widetilde{Z}} =0,
\end{align*}
where $Z=\pi(\widetilde{Z})$ is the image of $\widetilde{Z}$ by $\pi$
and $m_{p_i}(Z)$ is the multiplicity of $Z$ at $p_i$.
Thus we obtain 
\[
0 = \widetilde{Z}.\bigl(\pi^*L - \sum_{i=1}^d E_i \bigr)^{\dim \widetilde{Z}} =\widetilde{Z}.(\pi^*L - E_1)^{\dim \widetilde{Z}}
\]
and $m_{p_i}(Z)=0$ for  $i \not = 1$.
This means $p_i \not \in Z$,
or equivalently $\widetilde{Z} \cap E_i = \emptyset$ for  $i \not = 1$.
The equality $\widetilde{Z}.\bigl(\pi^*L - \sum_{i=1}^d E_i \bigr)^{\dim \widetilde{Z}}=0$
implies $\dim f(\widetilde{Z}) < \dim \widetilde{Z}$.
Hence there exists a curve $\widetilde{C} \subset \widetilde{Z}$
such that $f(\widetilde{C})$ is a point.
Set $C=\pi(\widetilde{C})$ to be the image of $\widetilde{C}$ in $X$.
Since $\widetilde{\phi}$ is a finite morphism,
$\widetilde{\phi}(\widetilde{C})$ is a curve on $\widetilde{\P}^N$
which is contracted by $f'$.
Thus $\phi(C) =\pi'(\widetilde{\phi}(\widetilde{C}))$ is a line on $\P^N$ containing $q$.
Since $\phi : X \arw \phi(X)$ is \'etale at $p \in X$
and $\phi(C) \cong \P^1$ is smooth at $q=\phi(p)$,
$C$ is also smooth at $p$.
Since $f(\widetilde{C})$ is a point,
$\widetilde{C}.\bigl(\pi^*L-\sum_{i=1}^d E_i \bigr)=0$ holds.
Furthermore, $\widetilde{C} \cap E_i \subset \widetilde{Z} \cap E_i = \emptyset$ for $i \not = 1$.
Thus $0=\widetilde{C}.(\pi^*L-E_1)=C.L - m_p(C)$ holds.
Hence
we have
\[
1= m_p(C) = C.L = \phi_*(C) . \calo_{\P^N}(1)= \deg(\phi |_C : C \rightarrow \phi(C)).
\]
Thus $\phi |_C : C \rightarrow \phi(C) \cong \P^1$ is an isomorphism
and $C.L=1$,
i.e.\ $C$ is a line on $X$ containing $p$.
Hence $(X,L)$ is covered by lines.
\end{proof}

\begin{rem}
The assumption that $|L|$ is base point free is necessary in Proposition \ref{sc and lines}.
For example,
let $(S,L)$ be a polarized smooth surface of general type such that $L^2=1$,
e.g.\ $S$ is a Godeaux surface and $L$ is the canonical divisor $K_S$.
Then $\ep(S,L;1)=1$ holds by \cite{EL} and the assumption $L^2=1$.
On the other hand,
$S$ is not covered by lines because $S$ is of general type.
\end{rem}

By Theorem \ref{thm1} and Proposition \ref{sc and lines} ,
we obtain the following corollary.

\begin{cor}[=Theorem \ref{thm2}]\label{width 1 and sc}
Let $P \subset \R^n$ be a lattice polytope of dimension $n$.
Then the following are equivalent.
\begin{itemize}
\item[i)] $P$ has lattice width one,
\item[ii)] $(X_P,L_P)$ is covered by lines,
\item[iii)] $\ep(X_P,L_P;1)=1$.
\end{itemize}
\end{cor}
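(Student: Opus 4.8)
The plan is to assemble the three equivalences directly from results already established, since each pairwise biconditional is supplied almost verbatim by an earlier statement.

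First I would establish $\mathrm{i)} \Leftrightarrow \mathrm{ii)}$. Because $P$ is assumed to be $n$-dimensional, Remark \ref{cayley and width} says that $P$ has lattice width one if and only if $P$ is a Cayley polytope of length $2$. Applying Theorem \ref{thm1} with $r=1$, the latter condition holds if and only if $(X_P,L_P)$ is covered by $1$-planes, that is, covered by lines in the sense of Definition \ref{def of planes}. Chaining these two biconditionals gives $\mathrm{i)} \Leftrightarrow \mathrm{ii)}$.

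Next I would establish $\mathrm{ii)} \Leftrightarrow \mathrm{iii)}$ by invoking Proposition \ref{sc and lines} with $(X,L)=(X_P,L_P)$. The sole hypothesis of that proposition is that $|L|$ be base point free, and this is precisely the property of $|L_P|$ recorded among the preliminaries: the linear system $|L_P|$ is base point free and defines the finite morphism $\phi_P$. Hence $\ep(X_P,L_P;1)=1$ if and only if $(X_P,L_P)$ is covered by lines, which is exactly $\mathrm{ii)} \Leftrightarrow \mathrm{iii)}$. Combining the two chains yields $\mathrm{i)} \Leftrightarrow \mathrm{ii)} \Leftrightarrow \mathrm{iii)}$.

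I do not expect any genuine obstacle: all the substance has already been pushed into Theorem \ref{thm1} and Proposition \ref{sc and lines}, so this corollary is a purely formal bookkeeping step recording that the width-one condition, the line-covering condition, and the equality $\ep(X_P,L_P;1)=1$ are mutually equivalent. The only point deserving a moment's attention is that the standing hypothesis $\dim P = n$ is what lets us discard the $n$-dimensionality clause inside Remark \ref{cayley and width}, so that \emph{lattice width one} matches \emph{Cayley polytope of length $2$} with no residual conditions.
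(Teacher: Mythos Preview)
Your proposal is correct and follows essentially the same route as the paper: it derives $\mathrm{i)}\Leftrightarrow\mathrm{ii)}$ from Theorem~\ref{thm1} together with Remark~\ref{cayley and width}, and $\mathrm{ii)}\Leftrightarrow\mathrm{iii)}$ from Proposition~\ref{sc and lines} using the base point freeness of $|L_P|$. The paper's own proof is in fact a two-line summary of exactly this argument.
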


\begin{proof}
i) $\Leftrightarrow$ ii) follows from Theorem \ref{thm1} and Remark \ref{cayley and width}.
ii) $\Leftrightarrow$ iii) follows from Proposition \ref{sc and lines}.
\end{proof}

As stated in Introduction,
this corollary tells us for which $P$
the Seshadri constant $\ep(X_P,L_P;1)$ is one.
We note that Nakamaye \cite{Na} gives an explicit description
for which polarized abelian varieties $(A,L)$
the Seshadri constant $\ep(A,L;1)$ is one.


\begin{thebibliography}{Kol}



\bibitem[BN]{BN}
V.~Batyrev and B.~Nill,
\emph{Combinatorial aspects of mirror symmetry},
Integer points in polyhedra-geometry, number theory, representation theory, algebra, optimization, statistics, 35--66,
Contemp. Math., \textbf{452}, Amer. Math. Soc., Providence, RI, 2008.

\bibitem[B+]{B+}
T.~Bauer, S.~Di Rocco, B.~Harbourne, M.~Kapustka, A.~Knutsen, W.~Syzdek, and T.~Szemberg,
\emph{A primer on Seshadri constants},
Interactions of classical and numerical algebraic geometry, 33--70, Contemp. Math., \textbf{496}, Amer. Math. Soc., Providence, RI, 2009.




\bibitem[CD]{CD}
C.~Casagrande and S.~Di Rocco,
\emph{Projective $\Q$-factorial toric varieties covered by lines},
Commun. Contemp. Math. \textbf{10} (2008), no. 3, 363--389.




\bibitem[Ch]{Ch}
K.~Chan,
\emph{ A lower bound on Seshadri constants of hyperplane bundles on threefolds},
Math. Z. \textbf{264} (2010), no. 3, 497--505.


\bibitem[CC]{CC}
R.~Curran and E.~Cattani,
\emph{Restriction of A-discriminants and dual defect toric varieties},
J. Symbolic Comput. \textbf{42} (2007), no. 1--2, 115--135.



\bibitem[De]{De}
J.P.~Demailly,
\emph{Singular Hermitian metrics on positive line bundles},
Complex algebraic varieties (Bayreuth, 1990), 87--104, Lecture Notes in Math., \textbf{1507}, Springer, Berlin, 1992.





\bibitem[DDP]{DDP}
A.~Dickenstein, S.~Di Rocco, and R.~Piene,
\emph{Classifying smooth lattice polytopes via toric fibrations},
Adv. Math. \textbf{222} (2009), no. 1, 240--254. 

\bibitem[DFS]{DFS}
A.~Dickenstein, E.M.~Feichtner, and B.~Sturmfels,
\emph{Tropical discriminants}, 
J. Amer. Math. Soc. \textbf{20} (2007), no. 4, 1111--1133. 


\bibitem[DN]{DN}
A.~Dickenstein and B.~Nill,
\emph{A simple combinatorial criterion for projective toric manifolds with dual defect},
Math. Res. Lett. \textbf{17} (2010), no. 3, 435--448.



\bibitem[DS]{DS}
A.~Dickenstein and B.~Sturmfels,
\emph{Elimination theory in codimension $2$},
J. Symbolic Comput. \textbf{34} (2002), no. 2, 119--135.



\bibitem[DR1]{DR1}
S.~Di Rocco,
\emph{Generation of k-jets on toric varieties},
Math. Z. \textbf{231} (1999), no. 1, 169--188.


\bibitem[DR2]{DR}
S.~Di Rocco,
\emph{Projective duality of toric manifolds and defect polytopes},
Proc. London Math. Soc. (3) \textbf{93} (2006), no. 1, 85--104. 


\bibitem[DHNP]{DHNP}
S.~Di Rocco, C.~ Haase, B.~Nill, and A.~Paffenholz,
\emph{Polyhedral adjunction theory},
Algebra Number Theory \textbf{7} (2013), no. 10, 2417--2446. 



\bibitem[EL]{EL}
L.~Ein and R.~Lazarsfeld, 
\emph{Seshadri constants on smooth surfaces},
Journees de Geometrie Algebrique d'Orsay (Orsay, 1992). Asterisque No. \textbf{218} (1993), 177--186.



\bibitem[Es]{Es}
A.~Esterov,
\emph{Newton polyhedra of discriminants of projections},
Discrete Comput. Geom. \textbf{44} (2010), no. 1, 96--148. 



\bibitem[Fu]{Fu}
W.~Fulton,
\emph{Introduction to toric varieties},
Annals of Mathematics Studies, \textbf{131}. The William H. Roever Lectures in Geometry. Princeton University Press, Princeton, NJ, 1993. 

\bibitem[GKZ]{GKZ}
I.~Gelfand, M.~Kapranov, and A.~Zelevinsky,
\emph{Discriminants, resultants, and multidimensional determinants},
Mathematics: Theory \& Applications. Birkh\"{a}user Boston, Inc., Boston, MA, 1994. x+523 pp.

\bibitem[La]{La}
R.~Lazarsfeld, 
\emph{Positivity in algebraic geometry I}, 
Ergebnisse der Mathematik undihrer Grenzgebiete, vol. \textbf{48}. Springer, Berlin (2004).

\bibitem[MT]{MT}
Y.~Matsui and K.~Takeuchi,
\emph{A geometric degree formula for A-discriminants and Euler obstructions of toric varieties},
Adv. Math. \textbf{226} (2011), no. 2, 2040--2064. 



\bibitem[Na]{Na}
M.~Nakamaye,
\emph{Seshadri constants on abelian varieties},
Amer. J. Math. \textbf{118} (1996), no. 3, 621--635.


\bibitem[Te]{Te}
E.A.~Tevelev,
\emph{Projective duality and homogeneous spaces},
Encyclopaedia of Mathematical Sciences, \textbf{133}. Invariant Theory and Algebraic Transformation Groups, IV. Springer-Verlag, Berlin, 2005.

\end{thebibliography}
\end{document}